\newtheorem{assumption}{Assumption} 
\newcommand{\iid}{i.\@i.\@d.\@ }
\newcommand{\eg}{\emph{e.g.}, }
\newcommand{\ie}{\emph{i.e.}, }
\providecommand{\abs}[1]{\left\lvert#1\right\rvert}
\providecommand{\norm}[1]{\left\lVert#1\right\rVert}
\newcommand{\R}{{\mathbb R}}
\renewcommand{\P}{{\mathbb P}}
\newcommand{\neww}[1]{}
\renewcommand{\hat}{\widehat}
\renewcommand{\leq}{\leqslant}
\renewcommand{\geq}{\geqslant}
\newtheorem{theorem}{Theorem}
\newtheorem{proposition}[theorem]{Proposition}
\begin{document}

 \title{$r$-Extreme Signalling for Congestion Control}

  \author{
 Jakub Mare{\v c}ek$^{a}$\thanks{Jakub is the corresponding author. Email: \{ jakub.marecek, robshort \}@ie.ibm.com, jiayuan.yu@concordia.ca},
     Robert Shorten$^{ab}$,
     Jia Yuan Yu$^c$\\
\footnotesize a: IBM Research, Technology Campus Damastown, Dublin 15,
     Ireland \\
\footnotesize b:  University College Dublin, SEEE, 
 Belfield, Dublin   4, Ireland \\
\footnotesize c:  Concordia University, 
1455 de Maisonneuve Blvd. West, Montr{\' e}al H3G 1M8, Canada
}

\maketitle

\begin{abstract}
In many ``smart city'' applications, congestion arises 
 in part due to the nature of signals received by individuals from a central authority.
In the model of Mare{\v c}ek et al.\ [Int. J. Control 88(10), 2015], 
 each agent uses one out of multiple resources at each time instant. 
The per-use cost of a resource depends on the number of concurrent users. 
A central authority has up-to-date knowledge of the congestion across all resources
 and uses randomisation to provide a scalar or an interval for each resource at each time.
In this paper, the interval to broadcast per resource is obtained by taking the minima and maxima of costs observed within a time window of length $r$,
rather than by randomisation.
We show that the resulting distribution of agents across resources also converges in distribution,
 under plausible assumptions about the evolution of the population over time.
\end{abstract}





\section{Introduction}

In many applications \cite{marevcek2015signaling}, 
 a number of agents need to use one out of a number of resources, 
 whose cost of use, per-agent, depends on the number of agents using the resource, concurrently. 
In addition to the agents, there is often also a central authority in charge of all the resources, with a complete and up-to-date information about their use.
The central authority may or may not provide information to the agents.
If no information is provided, the agents may choose the resources randomly,
  or using some simple policies \cite{gittins2011multi}.
If the central authority provides one scalar for each resource at each time instant, 
 all agents may compare the scalars across the resources available to them in the same way, 
 and make the same choice. 
Thereby, the usage of resources with the lowest announced scalar may increase sharply,
 while the usage of resources with the highest announced scalar may drop sharply,
 ultimately leading to a cyclic outcome.
As an alternative, Mare{\v c}ek et al.\ \cite{marevcek2015signaling} studied the use of randomisation in information provision, including the use of randomisation in deriving 
 an interval to be broadcast for each resource at each time instant. 
Many challenges remain, though. For one, the use of randomisation, or obfuscation,
  may be difficult to justify in practice.
The challenges 
 are intimately related to control, but little studied, so far.

Let us motivate our study by illustrating the cyclic outcome on 
 the example of roads during the rush hour.
Travel times are influenced by the number of people on the roads.
Congestion arises, when too many people want to use a particular road at the same time.
This is not necessarily due to the inherent capacity limits of
  the road, but often due to the ``synchronised'' manner of travel, and 
  the lack of foresight into the choices of other people.
Imagine that there are two roads of similar capacity from one section of a ring-road to the city center, 
 and a central authority announces the travel times on the radial roads as 10 and 20 minutes, respectively.
This may cause congestion on the first radial road, in the short term, and lead to the
 congestion alternating between the two radial roads, subsequently.
In Appendix~\ref{sec:app1}, we show that under simplistic assumptions,
a similar limit-cycle behaviour could be observed 
for \emph{any} approach that picks a scalar to broadcast for each resource, 
as long as the scalars are distinct across resources,
and that it can lead to an arbitrarily bad behavior.
In practice, the differences due to signalling are bounded, 
but the example suggests why we aim to reduce the synchronisation.

In this paper, we hence study the problem of information provision, which is:
\begin{itemize}
\item non-stationary, inasmuch the costs associated with resources are not stationary, but rather influenced by the agents' actions
\item populational, inasmuch the agents come in a variety of types,
with a population described by a distribution over the types
\item limited in terms of feedback, inasmuch the agent has access only to aggregate information about the state of each resource, provided by the central authority
\item limited in terms of the agents' memory, 
inasmuch the agents pick the resource based on the most recently provided piece of information for each resource.
\end{itemize}
The paper is structured as follows.
Section~\ref{sec:mod} formalises the problem of information provision and suggests how a central authority can ``de-synchronise'' actions of people on the roads by providing them with signals. In particular, it suggests a signalling scheme,
 where one interval is broadcast for each route, with the additional constraint 
 that each interval remains consistent with past observations.  
Our main theorem in Section~\ref{sec:extreme} shows that if the
population comprises of agents risk-averse to varying degrees over time, 
we can improve the social outcome using interval
signaling with the intervals formed by extremes of the values encountered so far.  
In Section~\ref{sec:sim}, we demonstrate the considerable impact 
in simulations.
We conclude with an overview of related and potential future work in Sections
\ref{sec:related} and \ref{sec:conclusions}, respectively.

\section{Model}
\label{sec:mod}

We consider a dynamic discrete-time model of congestion,
suggested above and illustrated in Figure~\ref{fig:diag2}.
First, we describe the actions, then the signals, and finally 
the response of the population to the signals, which can be 
seen as a mapping from signals to actions.

\begin{figure}
\centering
\includegraphics[width=0.79\textwidth,clip=true,trim=8.5cm 7.5cm 6cm 4.5cm]{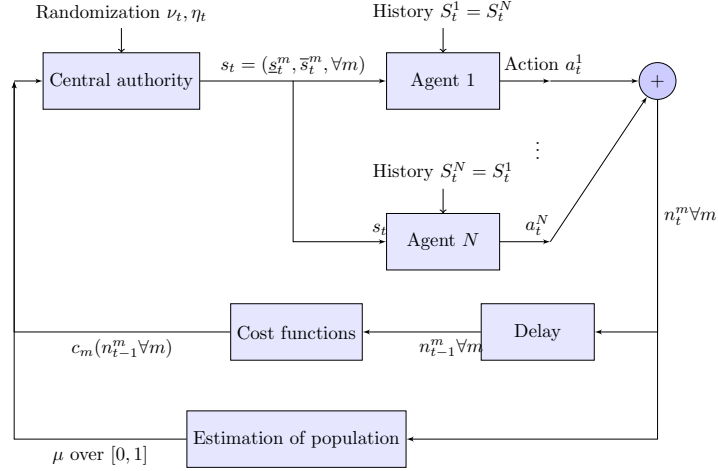}
  \caption{Block diagram of our model: Central authority
sends signal $s_t$ to each agent $1 \ldots N$.	 Each agent $i$ picks
an action $a_t^i$. Numbers $n^m_t$ of agents picking actions $A_m$ at time $t$,
summarise the state of the system at time $t$.
There is a social cost corresponding  to the state of the system, 
which the central authority uses to generate its signal.
}
  \label{fig:diag2}
\end{figure}

\subsection{Actions and their Costs}

A finite population of $N$ agents is confronted with $M$ alternative
choices at every time step.  The alternative actions are denoted
by $\{A_1,\ldots,A_M\}$ and time is discretised into periods
$t=1,2,\ldots$.  Let $a_t^i$ denote the choice of agent $i$ at time
$t$ and $n^m_t = \sum_i 1_{[a_t^i = A_m]}$ be the number of agents
choosing action $A_m$ at time $t$.  Throughout the paper, we assume
that each agent has to pick one of the $M$ actions at every time $t$.

The alternative actions $\{A_1,\ldots,A_M\}$ are perfectly
substitutable, \ie each agent decides only based on the cost.  The
cost of action $A_m$ at time $t$ is a function of the
number $n^m_t$ of agents that pick $A_m$ at time $t$.  We let $n_t$
denote the vector $(n^1_t , \ldots, n^M_t)$.  Let $c_m: \mathbb N \to
\mathbb R_+$ denote the so-called cost function for action $A_m$.  If
$n^m_t$ agents choose action $A_m$ at time $t$, the cost of action
$A_m$ at time $t$ to any single of them is $c_m(n^m_t)$.  We assume
that all $\{c_m\}$ are continuous.
Figure~\ref{fig:ushaped5settings} gives an example of two cost
functions.

\begin{figure}[t!]
  \includegraphics[clip=true,width=0.49
  \textwidth]{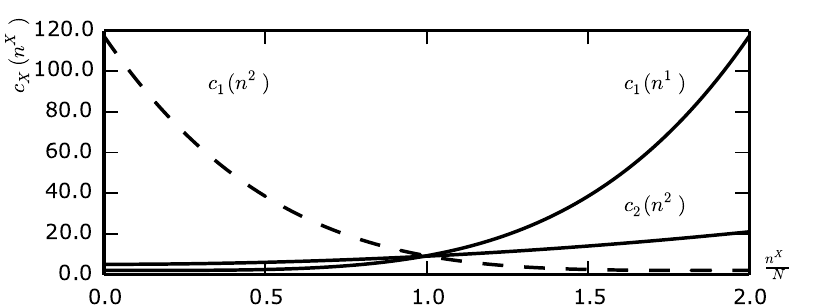}
  \includegraphics[clip=true,width=0.49
  \textwidth]{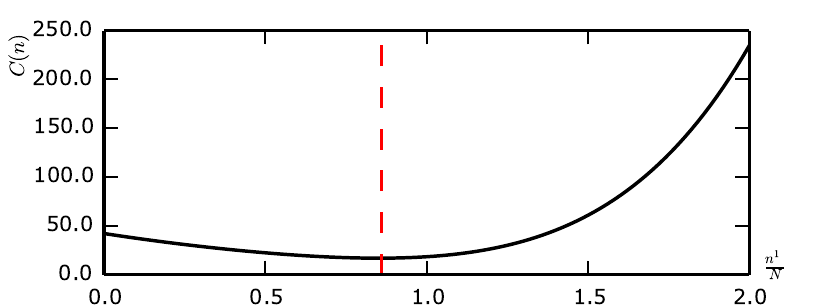}
                
  \caption{A trivial example with $N=2$. Left: Cost functions $c_1(x)
    \triangleq 2 (1 + 3.6 x^4)$ and $c_2(y) \triangleq 5 (1 + 0.8 y^2)$.
    Notice that $c_1$ is increasing in $x$ (solid line), but decreasing in $y = 2 - x$ (dashed).
    The structure of the cost functions, albeit seemingly arbitrary, 
    is customary in transportation engineering, as explained in Section~\ref{sec:sim}. 
    Right: The corresponding social cost.
    Notice that the minimum is at 0.86 (dashed vertical line), 
    which renders policies picking the action uniformly at random
    suboptimal.  
}
  \label{fig:ushaped5settings}
\end{figure}

The social cost $C(n_t)$ weights the costs of the two actions at
time $t$ with the proportions of agents taking the two actions, \ie
\begin{align}
  \label{eqn:socialCost}
  C(n_t) \triangleq \sum_{m=1}^M \frac{n^m_t}{N} \cdot c_m(n^m_t).
\end{align}
The social cost corresponding to the example cost functions is shown
in Figure~\ref{fig:ushaped5settings}.  Of further interest is the
time-averaged social cost:
\begin{align}
  \label{eqn:meanSocialCost}
  \hat C_T \triangleq \frac{1}{T} \sum_{t=1}^{T} C(n_t).
\end{align}

We study a number of signaling schemes and responses from agents.

\subsection{Signaling Schemes}

We introduce signaling schemes, which 
communicate information about the past 
cost of the $M$ actions.
Let $H_t$ denote the history of congestion costs up to time $t$:
\begin{align*}
  H_t \triangleq \{c_m(n^m_j) : m=1,\ldots,M, j=1,\ldots,t\}.
\end{align*}
Let $\mathcal H_t$ denote the set of all possible histories at time
$t$.  For a fixed integer $d$, a \emph{signaling scheme} is a set of
mappings $ \{s_t : \mathcal H_t \to \mathbb R^d \mid t=1,2,\ldots \}$,
where $s_t$ denotes the signal that the central authority broadcasts to
all agents at time $t$.

In \emph{scalar} signaling schemes, we have $d=M$,
one scalar value for each action.  
In \emph{interval} signaling schemes, $d = 2M$, and $s_t = (\underline
u_t^m, \overline u_t^m : m=1,\ldots, M)$, with $\underline u_t^m \le
\overline u_t^m$.  Notice that scalar signaling
schemes are equivalent to interval signaling schemes with $\underline
u_t^m = \overline u_t^m$, but may perform worse, by an arbitrary amount, 
as per Appendix~\ref{sec:app1}.
  Notice that these signaling schemes
summarise the history of observations $H_t$.




In a signaling scheme that
  we call \emph{$r$-extreme}, 
  for any fixed positive integer $r$,
  the central authority broadcasts
  the same signal $s_{t,r} = (\underline u_{t,r}^m, \overline
  u_{t,r}^m:m=1,\ldots,M)$ to all agents at time $t$, where
  \begin{align*}
    \underline u_{t,r}^m &= \min_{j=t-r,\ldots,t-1} \{ c_m(n^m_j) \},\\
    \overline u_{t,r}^m &= \max_{j=t-r,\ldots,t-1} \{ c_m(n^m_j) \}
    \quad\mbox{for all }m.
  \end{align*}

%

In a signaling scheme that we call \emph{$r$-subinterval}, 
for any fixed positive integer $r$,
the central authority broadcasts a signal $s_{t,r} = (\underline
w_{t,r}^m, \overline w_{t,r}^m: m=1,\ldots,M)$ to all agents at time $t$, such that
\begin{align}
  \max_{j=t-r,\ldots,t-1} \{ c_m(n^m_j) \} \geq \overline w_{t,r}^m \geq
  \underline w_{t,r}^m &\geq \min_{j=t-r,\ldots,t-1} \{ c_m(n^m_j) \},
  \label{eq:supported4}
\end{align}
for all $m$.
Notice that extreme signaling is a special case of subinterval signaling.

\subsection{Agent Population and Policies}

In response to the history of signals received prior to time $t$ and including it,
every agent $i$ takes action $a_t^i$.  For example, this action can be
a function of only the signal at a single time step $t$.  We
assume that
every agent acts based only on the signals,
without considering the response of other agents to its own action.
%
This is a reasonable assumption for three reasons.  First, it is hard
for the agent to obtain more information than the signal sent by the
central authority.  Second, the agents know that the signals received are
consistent with past observations.  Finally, when there is a large
number of agents, each has a very limited effect on the population as
a whole. 

Formally, let $S_t$ denote the history of signals broadcast up to time
$t$:
\begin{align*}
  S_t \triangleq \{s_1, \ldots, s_{t-1}\}.
\end{align*}
Let $\mathcal S_t$ denote the set of possible realisations of signal
histories 
up to time $t$.  A mapping of a signal history to an action, $\mathcal
S_t \to \{A_1,\ldots,A_M\}$, is called a policy.  We assume that the
number of agents $N$ is fixed over time.  We let $\Omega$ denote the
set of all possible types of agents.
Each type $\omega \in \Omega$ is associated with a policy, and every
agent of type $\omega$ follows the policy $\pi^\omega: \mathcal S_t
\to \{A_1,\ldots,A_M\}$.

We model the evolution of the number of agents of each type as
follows.  Let $\{\eta_k : \Omega \to \mathbb R \mid k=1,\ldots,K\}$
denote a finite set of probability measures over $\Omega$.  For
instance, for each subset $O \subseteq \Omega$, $\eta_k(O)$ can be
interpreted as a fraction of agents with policy $\pi^{\omega}$, except
that, for simplicity of analysis, the product $\eta_k(O) N$ does not
have to be an integer.  We let $(d_1,\ldots,d_K)$ denote a probability
measure over $(\eta_1,\ldots,\eta_K)$, \ie a probability measure over
a set of probability measures over $\Omega$.  The distribution of
agents among types $\Omega$ over time steps $t=1,2,\ldots$ is an \iid
sequence of random variables $\{\mu_t\}$, where the distribution of
$\mu_t$ is defined as $\P(\mu_t = \eta_k) \triangleq d_k$ for all $k$.
This allows us to model a population of agents that changes over time,
\eg one driver leaves the road network and is replaced by another
driver, with another policy.  For simplicity, we call $\mu_t$ the
\emph{population profile} at time $t$.

\subsection{$\pi^\omega$-policies}
\label{sec:policies}
In the case of $r$-extreme signaling, 
we consider a set of agent types $\Omega$, which is a finite set of
numbers, each of which is within $[0,1]$.
Recall that every agent $i$ receives the interval signal $s_t :=
(\underline{u}_t^m, \overline{u}_t^m:m=1,\ldots,M)$.  In response, we
assume that each agent $i$ of type $\omega$ follows the policy
$\pi^\omega$:
\begin{align}
  a_t^i = \pi^\omega(S_{t}) \in \left\{ \arg\min_{m \in \{1,\ldots,M\}} \omega
  \underline{u}^m_t + (1-\omega) \overline{u}^m_t \right\}, \label{eq:policy}
\end{align}
with the minimiser chosen uniformly at random, if non-unique.
This policy is a greedy heuristic, 
which seems natural, when one considers the following special
cases:
\begin{itemize}
\item \emph{Risk-seeking} $\omega = 1$, \ie acts based only on the
  best-case elements $(\underline{u}_t^m:m=1,\ldots,M)$
\item \emph{Risk-averse} $\omega = 0$, \ie acts based only on the
  worst-case elements $(\overline{u}_t^m:m=1,\ldots,M)$
\item \emph{Risk-neutral} $\omega = 0.5$, \ie acts based on the
  midpoints $\big((\underline{u}_t^m + \overline{u}_t^m)
  /2:m=1,\ldots,M \big)$.
\end{itemize}
Notice that this policy \eqref{eq:policy} could also model
convexifying ``multi-objective'' agents, \eg 90\% risk-seeking and
10\% risk-averse.

%
%
%
%
%


\section{A Stability Analysis} 
\label{sec:extreme}

In this section, we analyze the impact of $r$-extreme signaling.  We show that it
is stable in the sense that the population profile converges in
distribution under mild assumptions.  In particular, we study the case, where
the parameter $r$ is the function $r(t) = t$ of the time step $t$.

\begin{assumption}[\iid $\mu_t$, ``Population Renewal'']\label{as:mu1}
\mbox{} \\
  The distribution $\mu_t$ is
  an \iid sequence of random variables with $\P(\mu_t = \eta_k) = d_k$,
 $0 < d_k < 1$ for all $t$ and $k$, $\sum_{k} d_k = 1$.
\end{assumption}


Notice that the population renewal assumption does not entail the
elements of the random vector $\mu_t$ being independent.  We show that
under the population renewal assumption, the congestion profile
$(n^1_t, n^2_t)$ converges
for $\ell$-Lipschitz continuous cost functions for $\ell < 1/(Nk')$.
Observe that, for example, the function $c_m(x) = x/N$ is
 $(1/N)$-Lipschitz. 

\begin{theorem}[``Asymptotic Stability''] 
  \label{thm:conv}
  There exists a constant $k'$, such that under Assumption~\ref{as:mu1},
  if the functions $\{c_m:m=1,\ldots,M\}$ are $\ell$-Lipschitz
  continuous for $\ell < 1/(N k')$, there exists a unique limit, an $M$-dimensional random
  variable $Z$ such that the \emph{congestion profile} $(n_t^1/N, n_t^2/N, \ldots, n_t^M/N)$
  converges to $Z$ in distribution as $t\to\infty$.  
\end{theorem}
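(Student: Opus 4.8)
The plan is to exploit the special structure of $r$-extreme signaling at $r(t)=t$, where the broadcast interval for each action is assembled from the running extremes of the realized costs. Concretely, $\underline u_t^m = \min_{j<t} c_m(n_j^m)$ is non-increasing in $t$ and $\overline u_t^m = \max_{j<t} c_m(n_j^m)$ is non-decreasing in $t$. Since every $n_j^m$ lies in the compact interval $[0,N]$ and every $c_m$ is continuous, the observed costs are uniformly bounded, so each coordinate of the signal is a bounded monotone sequence. The first step is therefore to invoke monotone convergence pathwise and conclude that $s_t \to s_\infty$ almost surely, for some random limiting signal $s_\infty = (\underline u_\infty^m, \overline u_\infty^m : m = 1,\ldots,M)$. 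This is the structural heart of the argument: the running-extreme construction turns an otherwise coupled feedback loop into a monotone, hence convergent, sequence of signals.

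The second step is to decouple the current population draw from the signal. Because $s_t$ is a deterministic function of $n_1,\ldots,n_{t-1}$, and hence of $\mu_1,\ldots,\mu_{t-1}$, whereas Assumption~\ref{as:mu1} makes $\mu_t$ independent of $\mu_1,\ldots,\mu_{t-1}$, the draw $\mu_t$ is independent of $s_t$. Writing the congestion profile as $n_t/N = F(\mu_t, s_t)$, where $F(\eta_k, s)$ aggregates over types $\omega$ the tie-broken minimizers of $\omega\,\underline u^m + (1-\omega)\,\overline u^m$ weighted by $\eta_k$, I would condition on $s_t$ to obtain, for every bounded continuous test function $h$,
\begin{align*}
  \E\big[h(n_t/N)\big] = \E\Big[\textstyle\sum_{k} d_k\, h\big(F(\eta_k, s_t)\big)\Big] = \E\big[G(s_t)\big], \qquad G(s) \triangleq \sum_k d_k\, h\big(F(\eta_k, s)\big).
\end{align*}
Since $G$ is bounded and $s_t \to s_\infty$ almost surely, bounded convergence yields $\E[G(s_t)] \to \E[G(s_\infty)]$ as soon as $G$ is continuous at $s_\infty$ almost surely; that limit is exactly the statement that $n_t/N$ converges in distribution, with the limiting law characterized as that of $Z = F(\mu, s_\infty)$ for an independent copy $\mu$ of the population law.

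The main obstacle is precisely this continuity at the limit. The response $F(\eta_k,\cdot)$ is piecewise constant in the signal and jumps across the tie loci where two actions attain equal score, $\omega\,\underline u^{m_1} + (1-\omega)\,\overline u^{m_1} = \omega\,\underline u^{m_2} + (1-\omega)\,\overline u^{m_2}$, for some relevant type $\omega$. Each coordinate of $s_t$ converges, so each score converges; whenever the limiting scores of the two best actions are distinct for every relevant type, the argmin is locally constant and $G$ is continuous at $s_\infty$. The danger is that $s_\infty$ lands on a tie and the score differences, being combinations of separately monotone coordinate sequences, need not be eventually sign-definite, so that the response oscillates between actions and $n_t/N$ fails to settle. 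Ruling this out is where the hypothesis $\ell < 1/(Nk')$ is needed: viewing a single update as the random self-map $s \mapsto \big(c_m(N F^m(\cdot, s))\big)_m$ of signal space, the factor $N$ turns a change in the fractional response into a change in headcount, the $\ell$-Lipschitz bound turns that into a change in cost, and the constant $k'$ captures the maximal rate at which the aggregate response can move as the signal varies, a quantity fixed by the finitely many types and pairs of actions. Taking $\ell k' N < 1$ makes this map contractive in the relevant metric, which I would use to pin down the limiting signal and to exclude the persistent oscillation across ties, thereby securing the almost-sure continuity of $G$ at $s_\infty$ and, with it, convergence in distribution and uniqueness of the limit law $Z$.
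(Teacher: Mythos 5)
Your first step is correct and is a genuinely different route from the paper's. You observe that under $r(t)=t$ each coordinate of the signal is a bounded monotone sequence ($\underline u_t^m$ non-increasing, $\overline u_t^m$ non-decreasing, both confined to the finite set of achievable costs plus the initial value), so $s_t \to s_\infty$ almost surely, with no Lipschitz hypothesis needed. The paper instead writes the signal recursion as an iterated function system $s_t = F_t(s_{t-1})$ with $F_t = H_k$ w.p.\ $d_k$ and invokes the Barnsley--Elton average-contractivity criterion (Proposition~\ref{thm:be}), using the bound $\ell < 1/(Nk')$ to verify $\sum_k d_k \log\left(\norm{H_k(x)-H_k(y)}_1/\norm{x-y}_1\right) < 0$. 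Your route buys a stronger mode of convergence (almost sure rather than in distribution) at essentially no cost; what it does \emph{not} buy is the independence of the limit from the initial signal, which is exactly what the IFS theorem supplies and what the word ``unique'' in the statement is carrying. With your argument, $\underline u_\infty^m = \min\{\underline u_1^m, \min_j c_m(n_j^m)\}$ manifestly depends on the initialization, so the law of your $s_\infty$, and hence of $Z = F(\mu, s_\infty)$, is only pinned down once you add a separate coupling or contraction argument; you gesture at this but do not carry it out. Your conditioning step $\E[h(n_t/N)] = \E[G(s_t)]$ via the independence of $\mu_t$ from $s_t$ is correct and is essentially the paper's equation \eqref{eq:29} made explicit.

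The genuine gap is the one you yourself flag: passing from convergence of $s_t$ to convergence in distribution of $n_t/N$ through the piecewise-constant argmin response. You correctly identify that $G$ is discontinuous precisely on the tie loci $\omega \underline u^{m_1} + (1-\omega)\overline u^{m_1} = \omega \underline u^{m_2} + (1-\omega)\overline u^{m_2}$, and that if $s_\infty$ lands there the score differences need not be eventually sign-definite, so the response can oscillate and $n_t/N$ need not converge in law. But your proposed resolution --- that $\ell k' N < 1$ makes the update contractive and that contractivity ``excludes the persistent oscillation across ties'' --- is an assertion, not an argument: a contraction (or an on-average contraction in the IFS sense) can perfectly well have its fixed point, or its invariant measure's support, on a tie locus, and nothing in your sketch rules this out. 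In fairness, the paper is equally terse at this exact point (it concludes with ``the outcome $n^m_t$ converges likewise by \eqref{eq:29}'' without addressing the discontinuity), so you have not fallen behind the paper; but you have not closed the step either, and since you explicitly built your whole strategy around securing ``almost-sure continuity of $G$ at $s_\infty$,'' your proof is incomplete until you either show that the law of $s_\infty$ assigns probability zero to the tie loci or handle the tie case directly (e.g., by showing that on the event that a tie is attained in the limit, the two competing responses yield the same distribution under the uniform tie-breaking rule).
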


The proof relies on the following result from iterated function systems,
only trivially adapted from Barnsley et 
  al. {\cite{barnsley1988invariant,barnsley1989recurrent}}:

\begin{proposition}\label{thm:be}
  Let us have an index set $K$,
  a family $H$ of functions $\mathbb R^{2M} \to \mathbb R^{2M}$ indexed by $K$,
  and a family of real numbers $d$ indexed by $K$, $\sum_{k \in K} d_k = 1$. 
  Let us have another infinite family $w$, where for all $i_n$, 
  $w_{i_n} \mathbb R^{2M} \to \mathbb R^{2M}$ be \iid such that $\P(w_{i_n} = H_k) = d_k$ for all
  $k$.  
  If, for all $x, y \in \mathbb R^{2M}, x \not = y$,
  \begin{align*}
    \sum_{k \in K} d_k \log \left( \frac{\norm{H_k (x) - H_k (y)}_1}{\norm{x-y}_1)} \right) < 0,
  \end{align*}
  then the limit $\lim_{n} w_{i_n} ( \cdots w_{i_1} (x) \cdots )$ exists and is
  independent of $x$.
\end{proposition}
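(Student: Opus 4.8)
The plan is to recognise this as the classical convergence theorem for an iterated function system that is contractive on average, and to reproduce the Barnsley--Elton argument in three moves: reduce the forward orbit to a backward orbit by reversing the order of composition, prove almost-sure convergence of the backward orbit, and transfer the conclusion back to the forward orbit in distribution. Throughout, write $w_{i_1},w_{i_2},\ldots$ for the \iid maps with $\P(w_{i_n}=H_k)=d_k$, and for a fixed starting point $x$ set $X_n(x) \defeq w_{i_n}(\cdots w_{i_1}(x)\cdots)$ for the forward composition in the statement and $Z_n(x) \defeq w_{i_1}(\cdots w_{i_n}(x)\cdots)$ for the backward one. Because $(w_{i_1},\ldots,w_{i_n})$ is \iid, its reversal $(w_{i_n},\ldots,w_{i_1})$ has the same joint law, so $X_n(x)$ and $Z_n(x)$ are equal in distribution for every fixed $n$ and $x$. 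Consequently it suffices to establish that $Z_n(x)$ converges almost surely to a random limit $Z_\infty$ whose law does not depend on $x$; the forward orbit then converges in distribution to that same law, which is exactly the content invoked by Theorem~\ref{thm:conv}.

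For the backward orbit I would exploit that it is \emph{nested}: passing from $Z_n(x)$ to $Z_{n+1}(x)$ changes only the innermost argument from $x$ to $w_{i_{n+1}}(x)$, both of which are then pushed through the \emph{same} outer composition $w_{i_1}\circ\cdots\circ w_{i_n}$. Writing $s_k$ for the single-step contraction factor of $H_k$ and $P_n \defeq \prod_{j=1}^{n} s_{i_j}$ for the factor of that outer composition, one gets $\norm{Z_{n+1}(x)-Z_n(x)}_1 \le P_n \cdot \max_k \norm{H_k(x)-x}_1$. Since the hypothesis is precisely an average log-contraction statement, the strong law of large numbers applied to the \iid increments $\log s_{i_j}$ gives $\tfrac1n\log P_n \to \sum_{k} d_k \log s_k < 0$ almost surely, so $P_n$ decays geometrically and the telescoping increments are almost surely summable. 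Hence $Z_n(x)$ is almost surely Cauchy and converges to some $Z_\infty$. The same geometric factor bounds $\norm{Z_n(x)-Z_n(y)}_1 \le P_n\norm{x-y}_1 \to 0$, so the limit is independent of $x$, completing the reduction.

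The main obstacle is that the displayed hypothesis is \emph{pointwise} in $(x,y)$, whereas the strong-law step wants a uniform single-step factor $s_k$ with $\sum_k d_k\log s_k<0$, and the supremum defining $s_k$ need not inherit strict negativity from a pointwise average. I would address this in two complementary ways. First, the exact hypothesis is best used through a coupling supermartingale: driving two forward orbits from $x$ and $y$ by the same maps and setting $D_n \defeq \norm{X_n(x)-X_n(y)}_1$, the tower property gives $\E[\log D_{n+1}\mid \mathcal F_n] = \log D_n + \sum_k d_k \log\!\big(\norm{H_k(X_n(x))-H_k(X_n(y))}_1/D_n\big)$, and the hypothesis makes the increment strictly negative at every non-coalesced pair, so $\log D_n$ is a supermartingale and converges almost surely. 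Second, to upgrade this to $D_n\to 0$ I would restrict to a compact forward-invariant absorbing set on which the continuous map $(x,y)\mapsto \sum_k d_k\log(\norm{H_k(x)-H_k(y)}_1/\norm{x-y}_1)$ is bounded away from $0$, giving a uniform negative drift; such a set is available in the present application, where the $H_k$ arising from $r$-extreme signalling are affine with direction-independent contraction ratios, so there the pointwise and uniform conditions coincide. Reconciling the pointwise assumption with the uniform decay demanded by the strong law is thus the crux, and everything else (integrability of $\log s_{i_1}$, measurability, and the distributional transfer) is standard Barnsley--Elton bookkeeping.
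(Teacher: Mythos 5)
Your skeleton is exactly the Elton/Barnsley--Elton argument from the sources the paper cites: reverse the composition order, use that $(w_{i_1},\ldots,w_{i_n})$ and its reversal are equal in law so forward and backward orbits agree in distribution for each fixed $n$, telescope the nested backward orbit against the product $P_n$ of step factors, and invoke the strong law. That matches the intended proof --- the paper itself never proves the proposition, describing it as ``only trivially adapted'' from Barnsley et al. --- and you have also correctly located the crux: the displayed hypothesis is pointwise in $(x,y)$, whereas your telescoping and SLLN step need the uniform Elton condition $\sum_k d_k \log \mathrm{Lip}(H_k) < 0$, and the former does not imply the latter.

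The problem is that neither of your two repairs closes this gap. The supermartingale step fails at its conclusion: $\log D_n$ is not bounded below, the supermartingale convergence theorem needs $\sup_n \E[(\log D_n)^-]<\infty$ and would deliver a \emph{finite} a.s.\ limit, whereas coalescence means $\log D_n \to -\infty$; a strictly negative drift that is not bounded away from zero cannot force this. Indeed the proposition as literally stated is false, so no argument can close the gap without strengthening the hypothesis: take $K$ a singleton, $d_1=1$, and $H$ acting coordinatewise by $H(x)_i = \sqrt{x_i^2+1}$. Then $\norm{H(x)-H(y)}_1 < \norm{x-y}_1$ for all $x \neq y$, so the displayed log-average is negative, yet $H^n(x)_i = \sqrt{x_i^2+n} \to \infty$ and no limit exists. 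The same example shows that the compact forward-invariant absorbing set you posit need not exist, and your fallback claim that the $H_k$ arising in this paper are affine is also wrong: each $H_k$ composes coordinatewise min/max updates with a congestion response defined through indicator functions of linear inequalities in the signal, so the maps are discontinuous and at best piecewise affine. What actually rescues the application is not affinity but the uniform bound the paper establishes inside the proof of Theorem~\ref{thm:conv}: $\norm{H_k(x)-H_k(y)}_1 \le \norm{x-y}_1$ for every $k$ and every pair, with strict contraction occurring with probability bounded away from zero, derived from the Lipschitz hypothesis $\ell < 1/(Nk')$. If you restate the proposition with Elton's uniform hypothesis (plus the standard moment condition on $\log \mathrm{Lip}(w_{i_1})$), your first two paragraphs already constitute a complete proof; as written, your third paragraph leaves the statement unproved, and in fact unprovable.
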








\begin{proof}
  \label{proof1}
  The proof proceeds in two steps.  First, we show that the signal
  process $s_t$ is an iterated function system. Then, we show that it
  converges in distribution.

  \textbf{(Step 1)}\\
  In order to apply Proposition~\ref{thm:be}, we construct an iterated
  function system in $\mathbb R^{2M}$. 
  First, let us recall the definitions introduced previously:
  \begin{align}
  s_{t+1} & \triangleq (\underline u^1_{t+1}, \overline u^1_{t+1}, 
                        \underline u^2_{t+1}, \overline u^2_{t+1}, \ldots,
                        \underline u^M_{t+1}, \overline u^M_{t+1})\label{defS}
  \end{align}
  where for all $m$:
  \begin{align}
    \underline u^m_{t+1} &\triangleq \min_j \{c_m(n^m_j) : j=1,\ldots,t \} \label{eq:28}   \\
    &= \min \{\underline u^m_t, c_m(n^m_t)\},\notag \\
    \overline u^m_{t+1} &= \max \{\overline u^m_t, c_m(n^m_t)\}. \notag 
  \end{align}
  Next, let us see that $n^m_t$ is a random variable:
  \begin{align}
    n^m_t &= \sum_i 1_{(a^i_t = A_m)}  \\
    &= \sum_i \sum_{\omega \in \Omega} 1_{(a^i_t = A_m \mid \textrm{ agent } i \textrm{ is of type }
      \omega
      )} \mu_t(\omega) \notag \\
    &= N \sum_{\omega \in \Omega} 1_{ \bigwedge_{s \neq r} ( \omega
      \underline u^m_t + (1-\omega) \overline u^m_t < \omega
      \underline u^s_t + (1-\omega) \overline u^s_t )}
    \mu_t(\omega)\label{eq:29}.
  \end{align}
  Observe that by Assumption~\ref{as:mu1}, the sequence $\{\mu_t\}$ is
  \iid.  Hence, $(n^m_t \mid 1 \le m \le M)$ is a random variable.

  Recall that $\Omega$ is finite and the support of the random variable $\mu_t$ is a finite
  set $\{\eta_1, \ldots, \eta_K\}$ of probability measures, where each
  \begin{align}
    \eta_k = (\eta_k(1) , \ldots, \eta_k(\abs{\Omega}) )
  \end{align}
  is such that
  \begin{align}
    \eta_k(j) &\in \{ i / N : i = 0,\ldots, N\},\\
    \sum_j \eta_k(j) &= 1. \notag 
  \end{align}

  Plugging \eqref{eq:29} into \eqref{eq:28}, it follows that there
  exists a set of functions $\mathcal H = \{H_1, \ldots, H_K\}$ (as
  many as possible values of $\mu_t$) and a sequence of \iid random
  variables $\{F_t \in \mathcal H: t=1,2,\ldots\}$ such that
  \begin{align}
    s_t &= F_t(s_{t-1}), \quad\quad\quad\mbox{for all }t, \label{eq:30}\\
    F_t &= H_k \quad\mbox{w.p. }d_k, \quad\mbox{for all }t,k. \notag 
  \end{align}
  where each $H_k$ corresponds to a realisation $\eta_k$ of the random
  variable $\mu_t$.  Hence, the process $\{s_t\}$ is generated by an
  iterated function system.

  \textbf{(Step 2)}\\
  In order to apply Proposition~\ref{thm:be}, 
  let us consider two signals $x, y \in \mathbb R^{2M}$, as defined in \eqref{defS}.
  We want to show that for 
  all $\eta_k$, we have
  $\norm{H_k (x) - H_k (y)}_1 \le \norm{x-y}_1$.  
  and for some $\eta_k$, we have
  $\norm{H_k (x) - H_k (y)}_1 < \norm{x-y}_1$.  
  The former is clear, whereas to show the latter, we need to establish that
  there exists $m$ such that, for all $t$, the event
    \begin{align*}
      \{ x_{2m-1} \geq c_m(n^m_t(x)) \mbox{ and } y_{2m-1} \geq c_m(n^m_t(y))) \}
    \end{align*}
    has positive probability.
    The above event corresponds to the event
    \begin{align*}
      \{ \underline u^m_{t+1}(x) = c_m(n^m_t(x))
      \mbox{ and } \underline u^m_{t+1}(y) = c_m(n^m_t(y)) \},
    \end{align*}
    which has positive probability for all finite $t$.

  We have, by definition \eqref{eq:28},
  \begin{align}
    \norm{ H_k (x) - H_k (y)}_1 \leq \sum_m & \abs{\min(x_{2m-1}, c_m(n^m_t(x)) ) - \min(y_{2m-1}, c_m(n^m_t(y)) )}\\
    &+ \abs{\max(x_{2m}, c_m(n^m_t(x)) ) - \max(y_{2m}, c_m(n^m_t(y)) )}, \notag
  \end{align}
  where $n^m_t(x)$ denotes the congestion profile at time $t$ when the
  signal $x$ is broadcast to all agents.  We denote the two summands
  on the right-hand side by $R_1, R_2$.  First, we bound $R_1$; the
  other summand $R_2$ can be bounded by a similar argument.  We have four
  cases:
  \begin{enumerate}
  \item $x_1 < c_m(n^m_t(x)) \mbox{ and } y_1 < c_m(n^m_t(y))$
  \item $x_1 \geq c_m(n^m_t(x)) \mbox{ and } y_1 \geq c_m(n^m_t(y))$
  \item $x_1 < c_m(n^m_t(x)) \mbox{ and } y_1 \geq c_m(n^m_t(y))$
  \item $x_1 \geq c_m(n^m_t(x)) \mbox{ and } y_1 < c_m(n^m_t(y)).$
  \end{enumerate}
  We only need to consider Case 2, which occurs with probability
  bounded away from zero by the above argument.

  Under Case 2, we have
  \begin{align*}
    R_1
    &= \abs{\min(x_1, c_m(n^m_t(x)) ) - \min(y_1, c_m(n^m_t(y)) )}\\
    &= \abs{c_m(n^m_t(x)) - c_m(n^m_t(y))},
  \end{align*}
  Observe that
  \begin{align}
    \abs{n^m_t(x) - n^m_t(y)} 
      = N \Big| \sum_{\omega \in \Omega} ( &1_{ \bigwedge_{s \neq m} (
        \omega x_{2m-1} + (1-\omega) x_{2m}  < \omega x_{2s-1}  + (1-\omega) x_{2s}  )} \notag \\
      &-
      1_{ \bigwedge_{s \neq m} ( \omega y_{2m-1} + (1-\omega) y_{2m}  < \omega
        y_{2s-1}  + (1-\omega) y_{2s} 
        )})\mu_t(\omega) \Big| \notag \\
    \leq N \sum_{\omega \in \Omega} \mu_t(\omega) \Big| & (1_{
        \bigwedge_{s \neq m} ( \omega x_{2m-1} + (1-\omega) x_{2m} < \omega
        x_{2s-1} + (1-\omega) x_{2s} )} \notag \\
      &- 1_{\bigwedge_{s \neq m} ( \omega y_{2m-1} + (1-\omega) y_{2m} <
        \omega y_{2s-1} + (1-\omega) y_{2s}
        )} ) \Big| \notag \\
    = N \sum_{\omega \in W} \mu_t(\omega),
  \end{align}
  where $W$ is an interval obtained by 
  solving the system of inequalities:
  \begin{align}
    \omega x_{2m-1} + (1-\omega) x_{2m} &< \omega
    x_{2s-1} + (1-\omega) x_{2s},\\
    \omega y_{2m-1} + (1-\omega) y_{2m} &<
        \omega y_{2s-1} + (1-\omega) y_{2s},\quad\mbox{for all }s \neq m. \notag 
  \end{align}
  Hence, there exists a constant $\kappa$ such that
  \begin{align*}
    \abs{W} \leq \kappa \norm{x-y}_1.
  \end{align*}
  In turn, we obtain
  \begin{align}
    \abs{n^m_t(x) - n^m_t(y)} &\leq N \sum_{ \omega \in W}  \mu_t(\omega) \\
    &\leq N \kappa \norm{x-y}_1 \max_\omega \mu_t(\omega). \notag 
  \end{align}

  Since $c_m$ is $\ell$-Lipschitz by assumption, it follows that
  \begin{align}
    \abs{c_m(n^m_t(x)) - c_m(n^m_t(y)) } &\leq \ell \abs{n^m_t(x) -
      n^m_t(y)} \notag \\
    &< \norm{x-y}_1,
  \end{align}
  where the last inequality follow from assumption.
  Finally, this allows us to verify that
  \begin{align}
    \sum_k d_k \log \left( \frac{\norm{H_k (x) - H_k (y)}_1}{\norm{x-y}_1}
    \right) < 0.\label{eq:cond}
  \end{align}
  Having verified \eqref{eq:cond}, the process $s_t$ converges in
  distribution to a unique limit by Proposition~\ref{thm:be}. In turn,
  the outcome $n^m_t$ converges likewise by \eqref{eq:29}. 
\end{proof}

\section{Simulations}
\label{sec:sim}

Although the case of infinite recall, $r(t) = t$, is amenable to analysis, the case
of finite recall is more realistic.  We hence simulate the finite recall case on 
a benchmark \cite{BarGera} for the traffic assignment problem, where the cost function captures
the travel time and each action corresponds to one path between two vertices. 
The travel time for path $P(i)$ of agent $i$ is a sum of travel times $d^e_t$ over edges $e \in P(i)$ at time $t$, where the travel time is the Bureau of Public Roads (BPR) function of the number $x^e_t$ of agents passing over $e$:
\begin{align}
d^e_t = F^e \cdot ( 1 + B^e \cdot (x^e_t/\chi^e)^{p^e}),
\label{eq:traveltime}
\end{align}
where   
$\chi^e$ is the capacity of $e$, $F^e$ is the free-flow time of $e$, $B^e$ and $p^e$ are constants, again particular to $e$,
often $B^e = 0.15, p^e = 4$.

For simplicity, we send out signals $(\underline u_t^e, \overline u_t^e)$ specific to each edge $e$,
rather than for each possible path.
We also replace the social cost by the agent- and edge-wise sum $\sum_{i = 1}^{N} \sum_{e \in P(i)} d^e_t$.
Although this set-up may seem rather arbitrary, a similar set-up has been used throughout hundreds of papers \cite{patriksson2008survey} on the 
traffic assignment problem in transportation science. 

On two instances, we show that the interval signaling we propose results in 
a regret, i.e. the distance to the social cost at the stochastic user equilibrium, which is convergent. On an artificial instance, which we call Diamond, the regret goes to 0 after a small
number of iterations. 
On the well-known Sioux Falls instance of LeBlanc et al. \cite{leblanc1975efficient}, we improve the social cost
considerably, when compared to the best known stochastic user equilibrium,
as reported by Hillel Bar-Gera \cite{BarGera}.


\subsection{The Procedural Details}

Let us now clarify a number of procedural details.
First, notice that without knowing the congestion profile $n_{t+1}$ at
time $t$, it is difficult to enforce the capacity constraint $x^e_t < \chi^e_t$.
Consequently, the term $(x^e_t/\chi^e)^{p^e}$ tends to 
produce outliers in terms of $u^e_t$ across all $e, t$, which are just modeling artifacts.
In most of our simulations, we hence apply a cap on the travel time:
\begin{align}
c^e_t = F^e \cdot ( 1 + B^e \cdot (\min\{x^e_t/\chi^e,1\})^{p^e}).
\label{eq:traveltimeCapped}
\end{align}
This eliminates the outliers, but makes it necessary to track the violation
of capacity constraints by other means.
To that end, we introduce the capacity excess:
\begin{align}
E^e = \begin{cases} x^e - \chi^e & \textrm{ if } x^e > 1 \\
                  0     & \textrm{ otherwise }
\end{cases}
\label{eq:excess}
\end{align}
which captures the aggregate amount of violation of the constraint $x^e_t < \chi^e_t$. 

We have disregarded tolls and distances, discretised time,
and proceeded as follows in each period:
\begin{enumerate}
\item Generate the population with size $N$ in $|\Omega|$ types, where we assume the $\Omega = (0, \frac{1}{|\Omega|-1}, \frac{2}{|\Omega|-1}, \ldots, 1)$
throughout. The proportion of each type in the population is sampled from the uniform distribution $U(\frac{1}{|\Omega|} - \epsilon, \frac{1}{|\Omega|} + \epsilon)$ for the
first $|\Omega|-1$ types, with the remainder for the final one.
Specifically, we use $|\Omega| = 5$  and $\epsilon =0.15$.
\item Generate signals $(\underline u_t^e, \overline u_t^e)$  
      for each $e$, depending on whether we cap the travel time,
      using the history of congestion cost up to $t$.
      If the history contains $n = 2$ or more per-link costs recorded, we use the minimum and maximum 
      within the $\min\{ r, n \}$ most recent travel times (possibly capped) for the edge.
      Otherwise, we use signal $(0, 0)$ for each path to initialise the simulation.
\item Compute the the number $x^e_t$ of agents passing over each edge $e$. For each $\omega \in \Omega$ and each origin-destination pair $(o,d)$ we pick acyclic paths 
\begin{align}
M = \arg \min_{p \in P((o,d))} \sum_{e \in p} \omega \underline u_t^e + (1 - \omega ) \overline u_t^e.
\label{eq:minpaths}
\end{align}
     where, with some abuse of notation, $P((o,d))$ are all acyclic paths between origin $o$ and 
     destination $d$.
      If there are multiple such paths, $|M| > 1$, we subdivide the number of agents of the given type that travel between the given $o$ and $d$ into $|M|$ equal parts $r$, which need not be a whole number.
      For each edge $e \in p$ on each path $p \in M$, we then add the part $r$ to the traffic to $x^e_t$.
\item Generate generalised per-link costs $c_{t}^e$ using \eqref{eq:traveltimeCapped} or $d_{t}^e$ using \eqref{eq:traveltime}.
      Add those to the history for future use. 
\item Generate per-path costs. For each origin-destination pair $(o,d)$,
      we again consider paths $M$ as in \eqref{eq:minpaths} and sum up the per-edge travel times. 
\item Compute the social cost, by summing up across all origin-destination pairs $(o,d)$ and 
      all paths $m \in M$ as in (\ref{eq:minpaths}),
      the product of the per-path cost, the proportion of the population corresponding to the path,
      and the cardinality of the population.
\item Move to the next period, $t = t +1$.
\end{enumerate}

This makes it possible to plot 
the evolution of the social cost $C$ 
over time
and the evolution of the sum of the excesses $E$ \eqref{eq:excess} across all links over time
for
scalar signaling using the most recent travel time (NOW), means of values seen so far (MEAN), 
and $r$-extreme signaling $r=5,10,20$.
In plots of the social cost and excess, we also plot the corresponding value of the
stochastic user equilibrium not considering information provision, either for 
the global optimum, where known, or for the best known equilibrium as reported by 
Hillel Bar-Gera \cite{BarGera}.

\subsection{The Diamond Instance}

First, we present experiments on an instance on five nodes, 1, 2, \ldots, 5,
with five links 1-2, 2-3, 2-4, 3-5, and 4-5, which form a ``diamond shape''.
There, links 3-5 and and 4-5 have high very high capacity
and identical cost functions.
Each of the links 2-3 and 2-4 
can carry half of the total traffic,
 but their cost functions differ markedly, as suggested 
columns 3 and 5--7 in Table~\ref{tab:art1nettxt}.
These two files presented in Table~\ref{tab:art1nettxt} can be provided 
as an input to a variety of tools developed in transportation engineering,
and hence allow for cross-comparison and cross-validation of our results.




\begin{table}
\begin{minipage}[t]{0.45\textwidth}
\begin{tabular}{rrrrrrrrrr}
\rothead{From} & \rothead{To} & \rothead{$\chi^e$} & \rothead{Length} & 
\rothead{$F^e$} & \rothead{$B^e$} & \rothead{$p^e$} \\
\hline \\[1mm]
1 & 2 & 25900 & 6 & 6 & 0.15 & 4 \\
2 & 3 & 15 & 0 & 2 & 1 & 2 \\
2 & 4 & 15 & 0 & 2 & 10 & 6  \\
3 & 5 & 99900 & 6 & 1 & 0.15 & 1  \\
4 & 5 & 99900 & 6 & 1 & 0.15 & 1  
\end{tabular}
\end{minipage}
\hskip 24mm
\begin{minipage}[t]{0.45\textwidth}
\begin{verbatim}
<NUMBER OF ZONES> 5
<TOTAL OD FLOW> 30
<END OF METADATA>
Origin 	1 
    5 :    30;
\end{verbatim}
\end{minipage}
\caption{Left: net.txt of the diamond instance, except for columns
Speed, Toll, and Type, whose values are uniformly 0, 0, 1.
Right: trips.txt of the diamond instance.
}
\label{tab:art1nettxt}
\end{table}


\begin{figure}
\centering
\includegraphics[width=0.49\textwidth]{./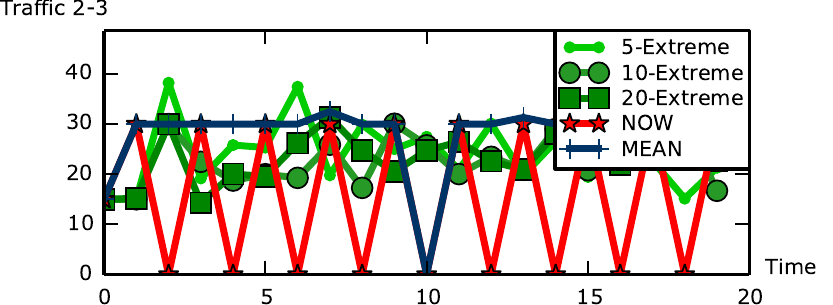}
\includegraphics[width=0.49\textwidth]{./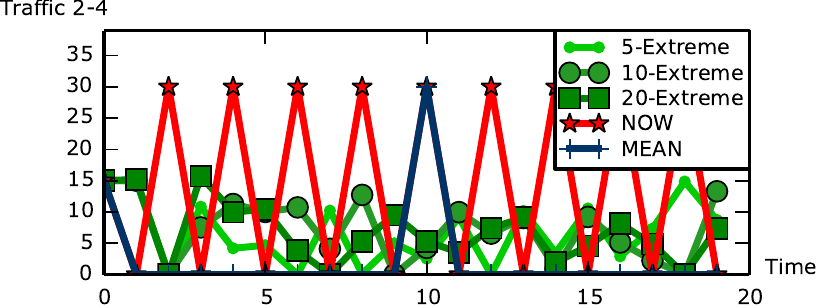}\\[2mm]
\includegraphics[width=0.49\textwidth]{./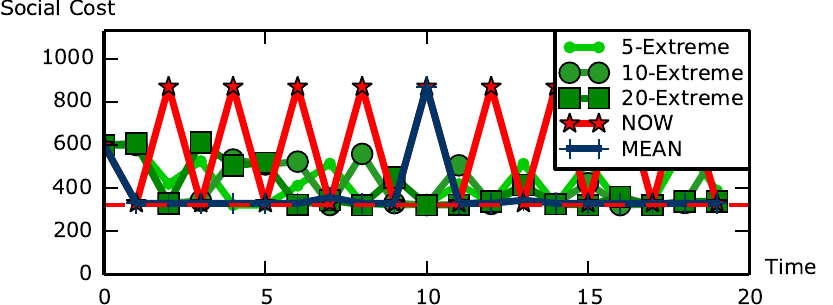} 
\includegraphics[width=0.49\textwidth]{./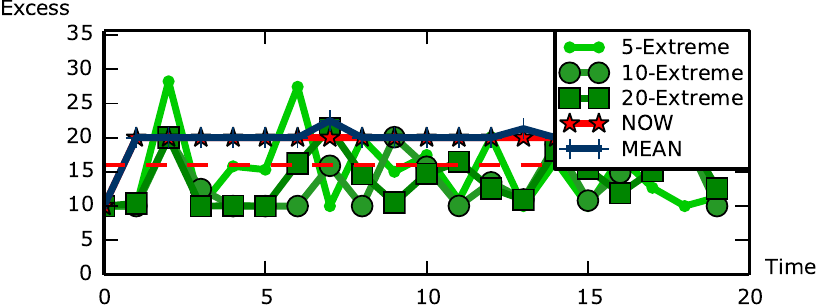}
\caption{
The evolution of traffic on links 2-3 and 2-4 and
the corresponding social cost and capacity excesses using capped travel times $c^e_t$ over time for 
scalar signaling using the most recent travel time (NOW) and means over all travel times (MEAN), 
compared with $r$-extreme signaling $r=5,10,20$ on the diamond instance.
The red dashed line corresponds to the best-known stochastic user equilibrium.}
\label{fig:art1traffic}
\end{figure}


The diamond instance illustrates the phenomenon of flapping well. 
The split of the traffic across links 2-3 and 2-4 is illustrated in the upper half of Figure~\ref{fig:art1traffic},
where for scalar signaling (NOW, MEAN), the traffic oscillate between paths 1-2-3-5 and 1-2-4-5,
whereas the higher the $r$, the smaller are the period-to-period changes for $r$-extreme signaling.
This corresponds to much lower social cost and  capacity excesses  
for $r$-extreme signaling,
compared to scalar signaling using means or most recent values,
as suggested in the bottom half of Figure~\ref{fig:art1traffic}.

Further, notice that the social cost approaches that of the best-possible 
stochastic user equilibrium, highlighted by the red dashed line in Figure~\ref{fig:art1traffic}.
The unique minimum of the un-capped cost at the stochastic user equilibrium, without considering information provision,  
of approx. 621.229 can be found 
by minimising 
$(2-x) (1 + (2-x)^2) + x (10 + x^6))$
over the interval $[0, 2]$.
The corresponding capped cost is 322.307 
and excess 15.985.
Hence, the regret approaches 0, in this particular case.


In Figure~\ref{fig:art1traffic}, we have capped the value of the travel time at the
value given by the travel time at capacity and counter the excess separately.
When we do not cap the travel time at capacity, the behavior in terms of the proportions of traffic
going either way is similar, as can be seen by comparing Figures~\ref{fig:art1traffic} and \ref{fig:art1trafficNotCapped},
while the absolute difference between the social costs of using the most recent time and $r$-extreme signaling
increases with the number of agents on the road.

\begin{figure}
\centering
\includegraphics[width=0.49\textwidth]{./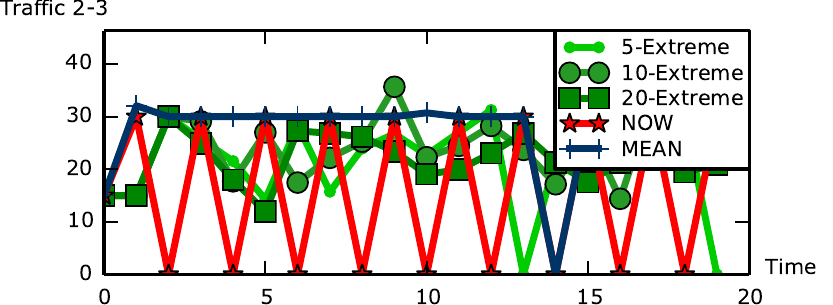}
\includegraphics[width=0.49\textwidth]{./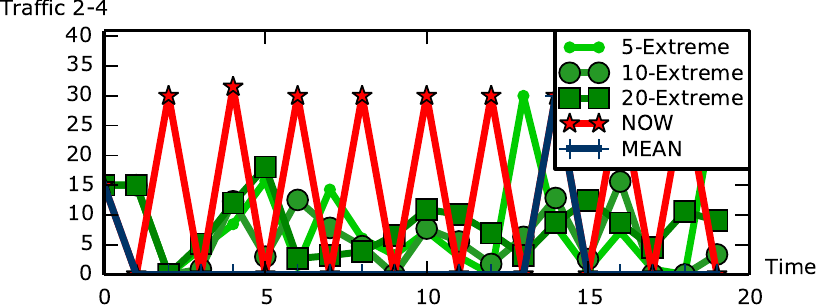}
\caption{
The evolution of traffic on links 2-3 and 2-4 over time on the diamond instance,
as in Figure~\ref{fig:art1traffic}, except using the uncapped travel time $d^e_t$.
}
\label{fig:art1trafficNotCapped}
\end{figure}

\subsection{Sioux Falls}

Next, we have tested the signaling on the well-known Sioux Falls instance
of LeBlanc et al. \cite{leblanc1975efficient}, displayed in Figure~\ref{fig:siouxNet}.
Since 1970s, this instance has attracted much attention in the transportation engineering
community \cite{Morlok1973,leblanc1975efficient,abdulaal1979,dantzig1979},
serving as a benchmark for the traffic assignment problem.
In particular, we have used the variant distributed by 
Bar-Gera \cite{BarGera},
which corresponds to 360,600 agents moving through a network of 76 road segments with 24 junctions.

The best-known stochastic user equilibrium,
 as available from Bar-Gera \cite{BarGera}
 has capped cost of 3853754.650
with excess of 265068.520
and un-capped cost of 7480225.345
with the same excess.
(Notice that these numbers vary from those reported by Bar-Gera, 
considering our objective functions differ.)
With cap on the travel time given by the capacity, as above \eqref{eq:traveltimeCapped}, 
the use of $r$-extreme signaling leads to lower social cost 
with lower excess, as suggested in Figure~\ref{fig:siouxSocial}.
Without cap on the travel time, the results are more varied,
and heavily skewed by a small number of enormous values.
Consequently, $r$-Extreme signaling seems to perform the best for $r=2$,
although this surprising behavior merits further study.

\begin{figure}
\centering
\includegraphics[width=0.49\textwidth]{./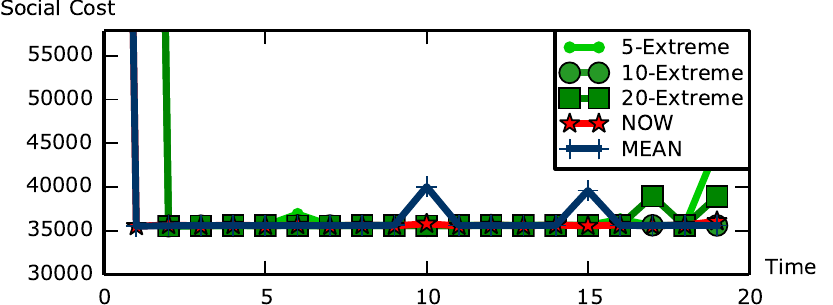} 
\includegraphics[width=0.49\textwidth]{./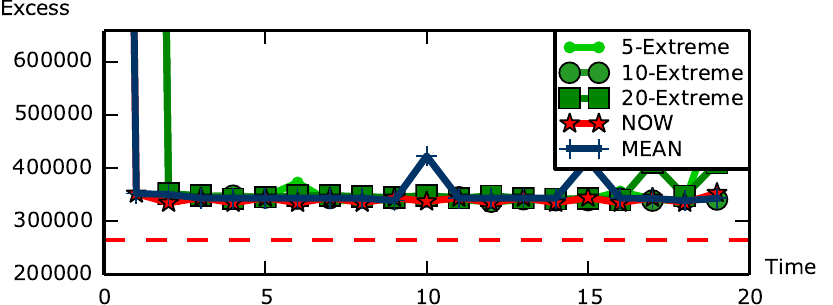}
\caption{
The social cost and capacity excess using capped travel times $c^e_t$
over time for 
scalar signaling using the most recent travel time (NOW) and means over all travel times (MEAN), 
compared with $r$-extreme signaling for $r=5,10,20$ on Sioux Falls.}
\label{fig:siouxSocial}
\end{figure}

\section{Related Work}
\label{sec:related}

There is related work being done in applied probability, control, operations
research, theoretical computer science, and traffic theory. 
There are number of excellent surveys 
\cite{sheffi1985urban,ibaraki1988resource,stefanov2001separable,patriksson2008survey,gittins2011multi}
available, although it may be difficult even for a book-length survey to be 
fully comprehensive.

Within applied probability,
the rich history of work on the 
multi-action restless bandits problem, e.g.,  \cite{whittle1988restless,weber1990index,bertsimas2000restless},
has been summarised by Gittins et al. \cite{gittins2011multi}.
See the work of Glazebrook \cite{glazebrook2011,glazebrook2015asymptotic}
for some of the present-best results.
The replacement of a single scalar of feedback per arm played has been 
  suggested \cite{MannorShamir11,alon2013bandits,alon2015online} in the bandits literature,
often in connection to revealing the outcome of further arms as well.
We are not aware of any bi-level extensions, e.g., seeing the problem from the point of view of the owner of the bandit.


Within game theory, the social cost is the metric of a number of studies 
\cite{Roughgarden2002,Perakis2004,Correa2005,Bhaskar2010},
which show that, even when agents have full information,
a natural equilibrium outcome can incur much higher total congestion than a
socially optimal outcome.
This is known as the price of anarchy.
Particularly interesting are studies
of the Nash equilibria in connection with ignorance
\cite{balcan2009price,alon2010bayesian,Snowball}, often
concerning the number of players
\cite{ashlagi2007,ashlagi2009two,AlonMT13}, failures of agents
\cite{meir2012congestion}, failures of resources
\cite{penn2007congestion,penn2011congestion}, or stratified
and risk-averse populations \cite{6736571,Piliouras2013}.  
Indeed, our work can be seen as showing the benefits of 
ignorance to a stratified and risk-averse population,
albeit over the long run.
We can hence describe the attractor, whose existence is often moot
in the studies of Nash equilibria.
See \cite{Cantarella1995} for further well-developed arguments why
considering the fixed-points of a dynamical system is preferable
to the study of the Nash equilibrium.

Within economics, our work is reminiscent of the equilibrium
outcome of Sobel \cite{banks1987equilibrium} in the context of signaling games. See  \cite{sobel2012signaling} for an up-to-date survey.  
Our work is also reminiscent of large bodies of work on 
follow-the-perturbed-Leader \cite{hannan1957approximation}, trembling-hand equilibrium \cite{Selten1975},
and stochastic fictitious play \cite{Harsanyi1973},
inasmuch we also study repeated decisions and that
the decisions are random variables. 
However, our scheme separates the decision making of the
central authority from the decision making of the agents, 
and uses non-trivial procedures for the former decisions.  

In the transportation literature, 
Daganzo and Sheffi \cite{daganzo1977stochastic} have introduced the
concept of stochastic user equilibrium, where users have considerable 
amounts of information, and perhaps surprising analytical powers.
Subsequently, a number of variants have been proposed, e.g., 
\cite{bertsimas2003robust} consider robust variants and
\cite{ahipasaoglu2014beyond} consider the stochastic user equilibrium with 
distributional uncertainty over the travel times.
\cite{knoop2008traffic} consider a stratified and risk-averse population,
but only as much as a link failure is concerned.
There are a number of other notions of stability, perhaps closer
to our notion, inasmuch they capture the repeated nature of the problem.
For example, \cite{smith1979} introduces the notion
of equilibrium as the limit of the congestion distribution if it
exists.  \cite{horowitz1984} considers a number of notions of noisy
signals and studies greedy policies and equilibria.  Our approach is
different from those that assign actions to agents,
instead of presenting them with information and letting them make the
decisions.

On the interface of transportation and control theory, 
there has been a recent interest in load balancing \cite{ScholteKing2014,ScholteChen2014}.
These schemes, however, rely on simple randomisation, without 
modelling heterogenous agent behaviour and actions
and without allowing for the same information to be provided to all agents.
On the interface of transportation and behavioral science, 
Ben-Akiva et al. \cite{ben1991dynamic} have studied the effects
of information on drivers.
In a number of subsequent papers \cite{kaufman1998user,ben2001route} and 
the dissertation of Bottom \cite{bottom2000consistent}, fixed points have been used to
study deterministic scalar signaling with deterministic 
response of the population. 
See \cite{papageorgiou2007its} for an extensive survey.
In contrast, our analysis can be seen as a study of a probabilistic counter-part of 
fixed points, which allows for the uncertainty in the response of the population.

Throughout, we are not aware of any theoretical guarantees on the behavior
of policies similar to ours, 
 as described in this paper and
 \cite{marevcek2015signaling}.
Specifically, we are not aware of any other paper, which would study the broadcasting of 
intervals, instead of scalars,
show its superiority,
or study the behaviour of systems, where such signals are being provided.
Compared to this paper, the set-up of \cite{marevcek2015signaling} is much simpler, 
and so are the proofs and simulations. 
Unlike \cite{marevcek2015signaling}, the approach presented in this paper does not employ randomisation, whose use may be unacceptable to the general public,
allows for the same signal to be broadcast to all agents,
such as at road-side displays,
and considers a more elaborate model of the populational response,
with risk-aware agents.
Both this paper and \cite{marevcek2015signaling} suggest the importance of the 
control-theoretic aspects of information provision.

\section{Conclusion and Future Work}
\label{sec:conclusions}

We have introduced a novel interval signaling scheme.
As opposed to scalar signaling schemes, interval signaling schemes
have tremendous potential in reducing the social cost of congestion
and present a major step forwards in a number of applications, which allow for
agent-based models. 
This includes transportation and congestion management more broadly.
These applications also open a number of questions throughout the possible applications
of the approach as well as within cognitive science and control theory.  

Key questions in cognitive science include: 
To what extent do human populations react to any signals?  
How do human populations react to interval signals?  
What are the factors to consider in modelling the populational response, 
outside of the risk-aversion?
What incentives would be most appropriate in improving the response?
Answers to such questions should be of considerable interest to the optimisation
and control communities.

Key questions in control theory include:
Can our stability result be extended to other non-scalar signals, e.g., histograms?
Can our stability result be extended to more general stochastic populations, e.g.,
 $\mu_t$ evolving as a Markovian process?
How to reason about policies, where the intervals are obtained by optimisation over the interval
signals to send in the following period, subject to the signals being truthful in some sense?
Perhaps most importantly: 
the model could be seen as a bi-level optimisation problem, with the information provision
at the upper level and the choice of action at the lower level. 
For bi-level optimisation problems,
even solving the first-order optimality conditions \cite{Jeyakumar2015,Nie2015} presents a major challenge, whereas our approach provides certain guarantees for a certain solution to a certain bi-level optimisation problem. Could this be generalised?
We hope to answer some of these questions in due course.

Finally, one could consider further applications.
What is the performance of interval signaling schemes beyond transport applications, 
  e.g., in ad keyword auctions, electricity consumption time slots, and emergency evacuation routes? 
Some could, indeed, be of considerable independent interest.


\FloatBarrier
\bibliographystyle{abbrv} %
\bibliography{traffic,experiments,bandits}





\clearpage
\appendix 

\section{An Analysis of Flapping}
\label{sec:app1}


The following proposition motivates the introduction of interval
signaling.  Specifically, it shows that interval signaling schemes
make it possible to all but get rid of a particularly bad cyclical
outcome, sometimes known as ``flapping'' in networking literature. 

\begin{proposition}[The Price of Flapping]
  \label{prop:flapping}
  For every number $J > 0$, $M=2$, and an odd integer $N \ge 3$, there
  exist functions $c_1, c_2$, a set $\Omega$, a population profile
  $\mu$, and an interval signaling scheme $\rho$ with social cost $C(n_{t+1}^\rho)$ at at $t+1$ such that for every
  scalar signaling scheme $\sigma$ with social cost $C(n_{t+1}^\sigma)$, we have $C(n_{t+1}^\rho) \leq
  C(n_{t+1}^\sigma) - J$.
\end{proposition}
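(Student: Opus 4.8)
The plan is to exploit the single structural weakness of scalar signaling exposed by the policy \eqref{eq:policy}: when $\underline u^m = \overline u^m$ for every $m$, the objective $\omega \underline u^m + (1-\omega)\overline u^m$ collapses to the common scalar and becomes independent of the type $\omega$. Hence under any scalar scheme every agent solves the same minimization, so (unless the two announced scalars are equal) all $N$ agents select the same action and the congestion profile is forced to a pile-up $(N,0)$ or $(0,N)$. Interval signaling, by contrast, can hand the two types genuinely different orderings of the two actions and thereby realize an interior split. I would therefore pick cost functions for which pile-up is very expensive but a balanced split is cheap, and then drive the gap past $J$ by scaling.

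Concretely, I would take $c_1 = c_2 = \lambda g$ for a fixed strictly increasing convex $g$ (e.g. $g(x) = x$) and a scale $\lambda > 0$ to be chosen last. A short convexity computation shows that $C(k) = \frac{k}{N} g(k) + \frac{N-k}{N} g(N-k)$ is minimized over $k \in \{0,\ldots,N\}$ at the most balanced split, which for odd $N$ is $k^\star = \frac{N-1}{2}$ (both $\frac{N-1}{2}$ and $\frac{N+1}{2}$ are integers precisely because $N$ is odd, and $N \geq 3$ guarantees $0 < k^\star < N$). For the interval scheme $\rho$ I would broadcast two overlapping intervals $[\underline u^1, \overline u^1]$ and $[\underline u^2, \overline u^2]$ with $\underline u^1 < \underline u^2 \leq \overline u^2 < \overline u^1$; then a risk-seeking agent ($\omega = 1$) compares the lower endpoints and prefers action $1$, whereas a risk-averse agent ($\omega = 0$) compares the upper endpoints and prefers action $2$, with strict inequalities so that no ties occur. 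Taking $\Omega = \{0,1\}$ and the population profile $\mu$ that places a fraction $\frac{N-1}{2N}$ on type $1$ and $\frac{N+1}{2N}$ on type $0$ (admissible multiples of $1/N$), the deterministic outcome is exactly $n^\rho_{t+1} = (\frac{N-1}{2}, \frac{N+1}{2})$, so $C(n^\rho_{t+1}) = \lambda\, C_1(k^\star)$, where $C_1$ denotes the unscaled social cost.

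For the lower bound on scalar schemes I would use the pile-up just described: any $\sigma$ announcing distinct scalars yields $C(n^\sigma_{t+1}) = \lambda g(N)$, and since $g$ is increasing with $\frac{N+1}{2} < N$ one has $C_1(k^\star) < g(N)$, so the deterministic gap equals $\lambda\,(g(N) - C_1(k^\star)) = \Theta(\lambda)$; choosing $\lambda \geq J / (g(N) - C_1(k^\star))$ makes it at least $J$. The remaining, and to my mind delicate, case is a scheme announcing equal scalars: the policy \eqref{eq:policy} then ties for every agent and the uniform tie-break makes the split $\mathrm{Bin}(N,1/2)$, so $C(n^\sigma_{t+1})$ is a genuine random variable. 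No surely-true inequality can hold here, because the random split can by chance equal $k^\star$; I would therefore read the claim for such schemes as a statement about the expected social cost, and bound $\E[C(n^\sigma_{t+1})] = \lambda\,\E[C_1(X)]$, which exceeds $\lambda\, C_1(k^\star)$ by a strictly positive amount (the binomial places mass $2^{-N}$ on the extreme pile-up of cost $C_1(0) = g(N)$), again $\Theta(\lambda)$. Taking $\lambda$ large enough to beat $J$ in both the distinct and the equal case completes the argument.

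I expect two points to carry the real work. The first is the tie-break subtlety above: the honest resolution is that the comparison with a tie-inducing scalar scheme is in expectation (equivalently, one restricts to the distinct-scalar schemes that actually produce the flapping this proposition is meant to penalize), and this should be stated explicitly rather than glossed. The second is the convexity lemma that the balanced split minimizes $C_1$ and, more importantly, that $g(N) - C_1(k^\star)$ and $\E[C_1(X)] - C_1(k^\star)$ are both bounded below by fixed positive constants depending only on $N$, so that a single linear scaling in $\lambda$ simultaneously clears $J$ in every case; the rest, namely verifying that the chosen intervals separate the two types and that the fractions are admissible multiples of $1/N$, is routine.
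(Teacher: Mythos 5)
Your proposal is correct and follows essentially the same route as the paper's proof: identical cost functions for the two actions so that the balanced split minimises the social cost, an interval signal whose overlapping endpoints separate risk-seeking from risk-averse types to realise that split, and the observation that any scalar signal collapses the policy \eqref{eq:policy} to a type-independent comparison and forces an all-or-nothing profile. The only substantive differences are that the paper tunes the cost function (flat at $1$ below the balanced point, growing to $J+1$ at the pile-up) to make the gap exactly $J$ rather than scaling by $\lambda$, and that it silently passes over the equal-scalar tie case which you rightly flag as needing an in-expectation reading or a restriction to distinct announced scalars.
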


The example used in the proof of Proposition~\ref{prop:flapping} may
seem extreme, but extensive simulations, which we have conducted, do
suggest that the cyclic behavior encountered in scalar signaling is
indeed reduced to a large extent, when one applies interval signaling.

\begin{proof}
  For an arbitrary constant $J$, let us construct cost functions $c_1, c_2$, 
  for two actions, where
  the difference in the social cost of the resulting congestion
  profiles
  \begin{align*}
    (n_t^1, n_t^2) \in & O_1 \triangleq \{ (0, N), (N, 0)\} \\ 
    (n_t^1, n_t^2) \in & O_2 \triangleq \{ \lfloor N/2 \rfloor, \lceil N/2 \rceil, \lceil N/2 \rceil, \lfloor N/2 \rfloor \}
  \end{align*}
  is $J$.  Consider
  \begin{align*}
    c_1(n) = c_2(n) = \begin{cases}
      \; 1 & \text{ for } n < \frac{N + 1}{2} \\
      \; (J+1)^{(2n-N)/N} & \text{ for } n \ge \frac{N + 1}{2}.
    \end{cases}
  \end{align*}
  The optimum of the social cost $C$ is clearly achieved for
  congestion profiles such that $\{n_t^1, n_t^2\} = 
  \{\lfloor N/2 \rfloor, \lceil N/2 \rceil\}$.  

  Let $\mu \triangleq \mu_t$ be deterministic for all $t$.  For
  interval signaling, observe that:
  \begin{align*}
    n_t^1 &= \sum_{\omega \in \Omega} 1_{[\pi^{\omega}(s_t) = A_m]} N \mu(\omega)\\
    &= \sum_{\omega \in \Omega} 1_{[\omega \underline{u}^1_t +
      (1-\omega) \overline{u}^1_t < \omega \underline{u}^2_t -
      (1-\omega) \overline{u}^2_t]} N \mu(\omega),
  \end{align*}
  which is possible to solve for $\Omega, \underline{u}^1_t,
  \overline{u}^1_t, \underline{u}^2_t, \overline{u}^2_t$ such that,
  \eg $n_t^1 = \lceil N/2 \rceil$, even considering that the interval
  signaling is $r$-subinterval \eqref{eq:supported4} and 
  $r$-extreme interval signaling $\rho$ with $r=2$.
  We can hence find a singleton $\Omega$ and an initial signal $s_1 \in \R^4$
  such that $(n_1^1, n_1^2) \in O_2$ by the argument above.  
  This means we do observe a cyclic behavior,
  but that is limited to elements of $O_2$, i.e. the best possible congestion profile,
  up to the rounding.

  In contrast, recall that the scalar signaling scheme is equivalent to
  interval signaling scheme with $\underline{u}^m_t =
  \overline{u}^m_t \forall m$, when the agents follow the
  policies $\pi^\omega$ for any $\omega$. For all $\omega, \omega'$,
  we have $\pi^\omega=\pi^{\omega'}$, $\omega$ thus becomes
  irrelevant, and hence we have:
  \begin{align*}
    n_t^1 &= \sum_{\omega \in \Omega} 1_{[\underline{u}^1_t <
      \underline{u}^2_t]} N \mu(\omega) \in \{ 0, N \}.
  \end{align*}
  For any scalar signaling scheme $\sigma$,
  the congestion profile $(n_t^1, n_t^2)$ will hence alternate
  between ``all-or-nothing'' elements of $O_1$.
  We call this cyclic behavior ``flapping''.

  Hence, $C(n_{t+1}^\sigma) = J+1, C(n_{t+1}^\rho) = 1$, and $C(n_{t+1}^\sigma)
  - C(n_{t+1}^\rho) = J$.
\end{proof}

%



\begin{figure}
\centering
\includegraphics[width=0.69\textwidth]{./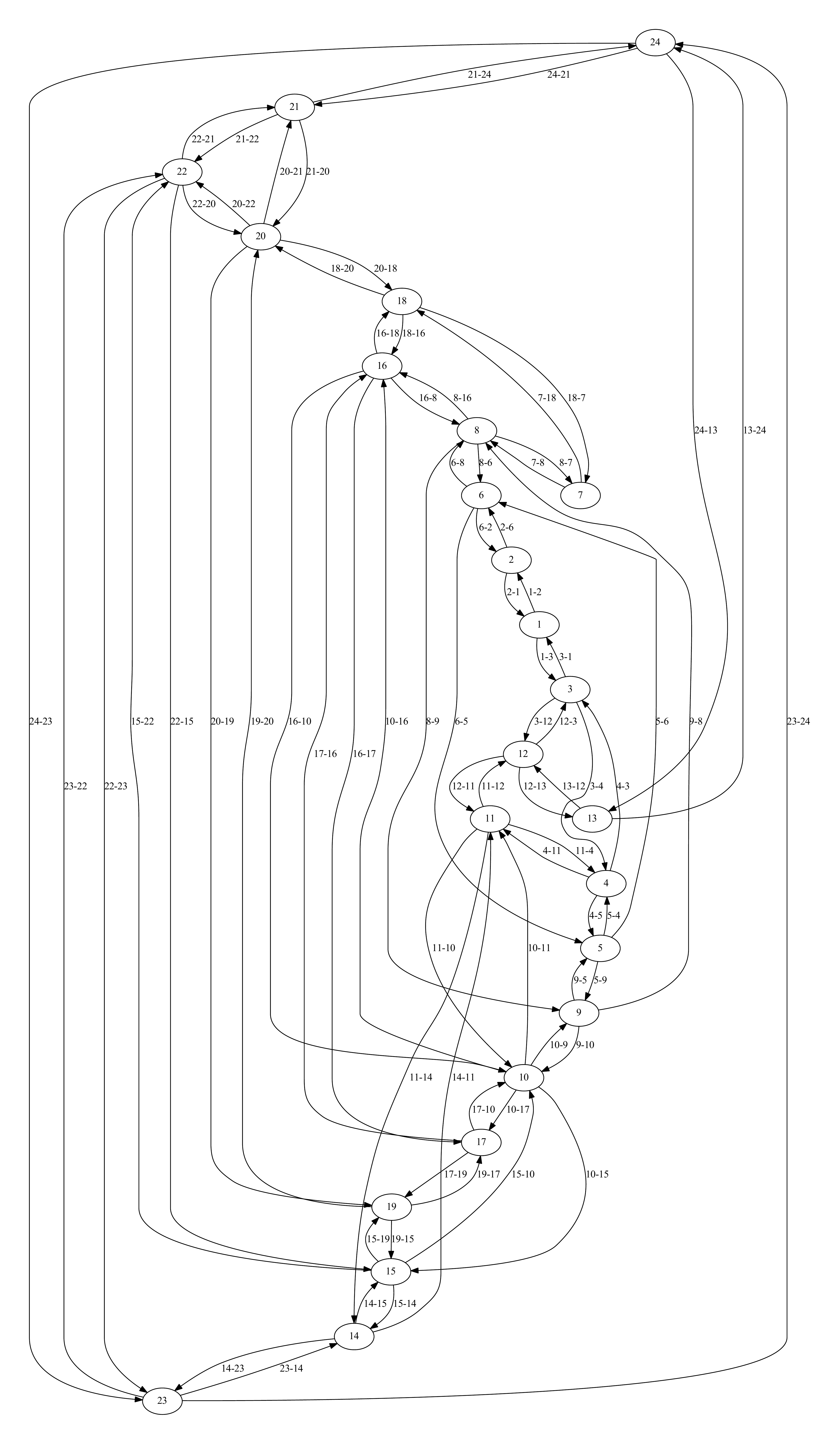}
\caption{A sketch of the Sioux Falls network.}
\label{fig:siouxNet}
\end{figure}

\end{document}